\newtheorem{theorem}{Theorem}[section]
\newtheorem{remark}{Remark}[section]
\newtheorem{example}{Example}[section]
\numberwithin{equation}{section}
\begin{document}
\title{On the operator Jensen-Mercer  inequality}
%\author{Hamid Reza Moradi$^{1}$, Shigeru Furuichi$^{2}$ and Mohammad Sababheh$^{3}$}
%----------Author 1
\author{Hamid Reza Moradi}

\address{%
Young Researchers and Elite Club\\
Mashhad Branch\\
slamic Azad University\\
Mashhad, Iran}

\email{hrmoradi@mshdiau.ac.ir}

%----------Author 2
\author{Shigeru Furuichi}
\address{Department of Information Science\br
College of Humanities and Sciences\br
Nihon University\br
3-25-40, Sakurajyousui\br
Setagaya-ku, Tokyo\br
156-8550, Japan}
\email{furuichi@chs.nihon-u.ac.jp}
\thanks{The author (S.F.) was partially supported by JSPS KAKENHI Grant Number 16K05257.}
%----------Author 3
\author{Mohammad Sababheh}
\address{Department of Basic Sciences\br
Princess Sumaya University for Technology\br
Amman 11941\br
Jordan}
\email{sababheh@psut.edu.jo}
\subjclass{Primary 47A63, Secondary 47A64, 46L05, 47A60.}
%\subjclass[2010]{Primary 47A63, Secondary 47A64, 46L05, 47A60.}
\keywords{Jensen-Mercer operator inequality, log-convex functions, operator quasi-arithmetic mean.} \maketitle
\begin{abstract}
Mercer inequality for convex functions is a variant of Jensen's inequality, with an operator version that is still valid without operator convexity.

This paper is two folded. First, we present a Mercer-type inequality for operators without assuming convexity nor operator convexity. Yet, this form refines the known inequalities in the literature. Second, we present a log-convex version for operators. We then use these results to refine some inequalities related to quasi-arithmetic means of Mercer's type for operators.
\end{abstract}
%------------------------------------------------------------------------------------%
\pagestyle{myheadings}
\markboth{\centerline {On the operator Jensen-Mercer  inequality}}
{\centerline {H.R. Moradi, S. Furuichi \& M. Sababheh}}
\bigskip
\bigskip
%------------------------------------------------------------------------------------%
%------------------------------------------------------------------------------------%
\section{\bf Introduction}
Recall that a function $f:I\subseteq \mathbb{R}\to \mathbb{R}$ is said to be convex on the interval $I$, if it satisfies the Jensen inequality
\begin{equation}\label{jensen}
f\left( \sum\limits_{i=1}^{n}{{{w}_{i}}{{x}_{i}}} \right)\le \sum\limits_{i=1}^{n}{{{w}_{i}}f\left( {{x}_{i}} \right)},
\end{equation}
 for all choices of positive scalars ${{w}_{1}},\ldots ,{{w}_{n}}$ with $\sum\nolimits_{i=1}^{n}{{{w}_{i}}}=1$ and ${{x}_{i}}\in I$. It is well known that this general form is equivalent to the same inequality when $n=2.$
 
 In 2003, Mercer found a variant of  \eqref{jensen}, which reads as follows.
\begin{theorem}{\bf (\cite[Theorem 1.2]{3})} 
If $f$ is a convex function on $\left[ m,M \right]$, then 
\begin{equation}\label{3}
f\left( M+m-\sum\limits_{i=1}^{n}{{{w}_{i}}{{x}_{i}}} \right)\le f\left( M \right)+f\left( m \right)-\sum\limits_{i=1}^{n}{{{w}_{i}}f\left( {{x}_{i}} \right)},
\end{equation} 
for all ${{x}_{i}}\in \left[ m,M \right]$ and all ${{w}_{i}}\in \left[ 0,1 \right]$ $\left( i=1,\ldots ,n \right)$ with $\sum\nolimits_{i=1}^{n}{{{w}_{i}}}=1$.    
\end{theorem}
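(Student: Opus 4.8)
The plan is to exploit the fact that every point of $[m,M]$ is a convex combination of the two endpoints $m$ and $M$, and to apply the ordinary Jensen inequality \eqref{jensen} (in fact just its two-point special case, i.e.\ the definition of convexity) twice: once to the data points $x_i$ and once to the ``reflected'' mean $M+m-\sum_i w_i x_i$.

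First I would fix the notation $\bar x:=\sum_{i=1}^{n}w_i x_i$ and note that $\bar x\in[m,M]$ because it is a convex combination of points in $[m,M]$. For each $i$ I would write
\[
x_i=\frac{M-x_i}{M-m}\,m+\frac{x_i-m}{M-m}\,M,
\]
which is a genuine convex combination since $x_i\in[m,M]$, and invoke convexity of $f$ to get
\[
f(x_i)\le\frac{M-x_i}{M-m}\,f(m)+\frac{x_i-m}{M-m}\,f(M).
\]
Multiplying by $w_i$ and summing over $i$ (using $\sum_i w_i=1$) yields
\[
\sum_{i=1}^{n}w_i f(x_i)\le\frac{M-\bar x}{M-m}\,f(m)+\frac{\bar x-m}{M-m}\,f(M).
\]

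Next I would apply the same idea to $y:=M+m-\bar x$, which again lies in $[m,M]$. Since $M-y=\bar x-m$ and $y-m=M-\bar x$, the endpoint decomposition of $y$ is
\[
y=\frac{\bar x-m}{M-m}\,m+\frac{M-\bar x}{M-m}\,M,
\]
so convexity of $f$ gives
\[
f\!\left(M+m-\bar x\right)\le\frac{\bar x-m}{M-m}\,f(m)+\frac{M-\bar x}{M-m}\,f(M).
\]
Adding this to the previous inequality, the coefficients of $f(m)$ and of $f(M)$ each sum to $1$, producing
\[
f\!\left(M+m-\sum_{i=1}^{n}w_i x_i\right)+\sum_{i=1}^{n}w_i f(x_i)\le f(m)+f(M),
\]
which rearranges to \eqref{3}. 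There is no real obstacle here: the argument is elementary, the only thing to check carefully is that all the weights appearing in the two convexity estimates are nonnegative, which follows from $m\le x_i\le M$ and $m\le\bar x\le M$. If one wanted an even softer route, one could instead note that the map $x\mapsto g(x):=f(M+m-x)$ is convex on $[m,M]$ as well, apply \eqref{jensen} to $g$ to obtain $g(\bar x)\le\sum_i w_i g(x_i)$, and then bound each $g(x_i)=f(M+m-x_i)$ from above by the endpoint estimate $\tfrac{M-x_i}{M-m}f(m)+\tfrac{x_i-m}{M-m}f(M)$ combined with the identity $f(m)+f(M)=f(M+m-x_i)+\big(\text{endpoint estimate for }f(x_i)\big)$; but the direct two-step computation above is the cleanest.
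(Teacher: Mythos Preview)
Your argument is correct. Note, however, that the paper does not supply its own proof of this statement: Theorem~1.1 is merely quoted from \cite{3} as background. That said, your two-step use of the secant line $L(t)=\frac{M-t}{M-m}f(m)+\frac{t-m}{M-m}f(M)$ together with the reflection $t\mapsto M+m-t$ is exactly the mechanism the paper employs later in the proof of Theorem~\ref{tha} (see \eqref{2}, \eqref{l} and the identity $L_0(t)=f(M)+f(m)-L(t)$), so your approach is fully in line with the paper's methods.
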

There are many versions, variants and generalizations for the inequality \eqref{3}; see for example \cite{11, 4,1}.

It is customary in the field of Mathematical inequalities to extend scalar inequalities, like \eqref{jensen} and \eqref{3}, to operators on Hilbert spaces.
For this end, we adopt the following notations. Let $\mathscr{H}$ and $\mathscr{K}$ be Hilbert spaces, $\mathbb{B}\left( \mathscr{H} \right)$ and $\mathbb{B}\left( \mathscr{K} \right)$ be the ${{C}^{*}}$-algebras of all bounded operators on the appropriate Hilbert space. An operator $A\in \mathscr{H}$ is called self-adjoint if $A=A^*$, where $A^*$ denotes the adjoint operator of $A$. If $A\in \mathscr{H},$ the notation $A\geq 0$ will be used to declare that $A$ is positive, in the sense that $\left<Ax,x\right>\geq 0$ for all $x\in \mathscr{H}.$ If $\left<Ax,x\right>>0$ for all non zero $x\in \mathscr{H},$ we write $A>0$, and we say than that $A$ is positive definite. On the class of self-adjoint operators, the $\leq $ partial order relation is well known, where we write $A\leq B$ if $B-A\geq 0$, when $A,B$ are self-adjoint.

In studying operator inequalities, the notion of spectrum cannot be avoided. If $A\in \mathscr{H},$ the spectrum of $A$ is defined by
$$\sigma(A)=\{\lambda\in \mathbb{C}: A-\lambda {\mathbf{1}}_{\mathscr{H}}\;{\text{is\;not\;invertible}}\},$$ where ${\mathbf{1}}_{\mathscr{H}}$ denotes the identity operator on $\mathscr{H}.$ Finally, in these terminologies, a linear map $\Phi :\mathbb{B}\left( \mathscr{H} \right)\to \mathbb{B}\left( \mathscr{K} \right)$ is said to be positive if $\Phi \left( A \right)\ge 0$ whenever $A\ge 0$ and $\Phi $ is called unital if $\Phi \left( {{\mathbf{1}}_{\mathscr{H}}} \right)={{\mathbf{1}}_{\mathscr{K}}}$.

 Recall that a continuous function $f:I\to\mathbb{R}$ is said to be operator convex if 
$$f\left(\frac{A+B}{2}\right)\leq \frac{f(A)+f(B)}{2}$$
for all self-adjoint $A,B\in \mathbb{B}\left( \mathscr{H} \right)$   and $\sigma(A),\sigma(B)\subset I.$ This is equivalent to the Jensen operator inequality, valid for the self-adjoint operators $A_i$ whose spectra are in the interval $I$, 
\begin{equation}\label{jensen_op_intro}
f\left(\sum_{i=1}^{n}w_iA_i\right)\leq \sum_{i=1}^{n}w_if(A_i),\,\,\,\,w_i>0,\,\,\, \sum_{i=1}^{n}w_i=1.
\end{equation}
It is evident that a convex function is not necessarily operator convex, and the function $f(x)=x^4$ provides such an example. Thus, a convex function does not necessarily satisfy the operator Jensen inequality \eqref{jensen_op_intro}. However, it turns out that a convex function satisfies the following operator version of the Mercer inequality \eqref{3}.
\begin{theorem}\label{thbb}{\bf (\cite[Theorem 1]{2})} Let ${{A}_{1}},\ldots ,{{A}_{n}}\in \mathbb{B}\left( \mathscr{H} \right)$ be self-adjoint operators with spectra in $\left[ m,M \right]$ and let ${{\Phi }_{1}},\ldots ,{{\Phi }_{n}}:\mathbb{B}\left( \mathscr{H} \right)\to \mathbb{B}\left( \mathscr{K} \right)$ be positive linear maps with $\sum\nolimits_{i=1}^{n}{{{\Phi }_{i}}\left( {{\mathbf{1}}_{\mathscr{H}}} \right)}={{\mathbf{1}}_{\mathscr{K}}}$. If $f:\left[ m,M \right]\subseteq \mathbb{R}\to \mathbb{R}$ is a convex function, then 
\begin{equation}\label{15}
f\left( \left( M+m \right){{1}_{K}}-\sum\limits_{i=1}^{n}{{{\Phi }_{i}}\left( {{A}_{i}} \right)} \right)\le \left( f\left( M \right)+f\left( m \right) \right){{1}_{K}}-\sum\limits_{i=1}^{n}{{{\Phi }_{i}}\left( f\left( {{A}_{i}} \right) \right)}.
\end{equation}
\end{theorem}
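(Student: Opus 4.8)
The plan is to reduce the operator statement to the elementary ``chord'' estimate for convex functions combined with the spectral theorem, mirroring Mercer's scalar argument but with averages replaced by a positive, unital family of maps; no operator convexity is needed.

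First I would record the key scalar fact: if $f$ is convex on $[m,M]$ and $t\in[m,M]$, then writing $t=\frac{M-t}{M-m}\,m+\frac{t-m}{M-m}\,M$ as a convex combination and applying \eqref{jensen} gives
\[
f(t)\le \frac{M-t}{M-m}f(m)+\frac{t-m}{M-m}f(M),\qquad t\in[m,M].
\]
Since each $A_i$ is self-adjoint with $\sigma(A_i)\subseteq[m,M]$, and the right-hand side is an affine function of $t$, the continuous functional calculus turns this into the operator inequality
\[
f(A_i)\le \frac{M\mathbf 1 - A_i}{M-m}\,f(m)+\frac{A_i-m\mathbf 1}{M-m}\,f(M)\qquad(i=1,\dots,n).
\]
Applying the positive linear map $\Phi_i$, summing over $i$, and using $\sum_{i}\Phi_i(\mathbf 1)=\mathbf 1$, I obtain, with the shorthand $\widetilde A:=\sum_{i=1}^{n}\Phi_i(A_i)$,
\[
\sum_{i=1}^{n}\Phi_i\!\left(f(A_i)\right)\le \frac{M\mathbf 1-\widetilde A}{M-m}\,f(m)+\frac{\widetilde A-m\mathbf 1}{M-m}\,f(M).
\]

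Next I would check that $B:=(M+m)\mathbf 1-\widetilde A$ is a legitimate argument of $f$. From $m\mathbf 1\le A_i\le M\mathbf 1$, positivity of each $\Phi_i$, and $\sum_i\Phi_i(\mathbf 1)=\mathbf 1$, one gets $m\mathbf 1\le \widetilde A\le M\mathbf 1$, hence $m\mathbf 1\le B\le M\mathbf 1$ and $\sigma(B)\subseteq[m,M]$. Applying the same chord estimate via functional calculus to $B$, and noting $M\mathbf 1-B=\widetilde A-m\mathbf 1$ and $B-m\mathbf 1=M\mathbf 1-\widetilde A$, gives
\[
f(B)\le \frac{\widetilde A-m\mathbf 1}{M-m}\,f(m)+\frac{M\mathbf 1-\widetilde A}{M-m}\,f(M).
\]
Adding this to the displayed bound for $\sum_i\Phi_i(f(A_i))$, the coefficients of $f(m)$ and of $f(M)$ each collapse to $\mathbf 1$, so
\[
f(B)+\sum_{i=1}^{n}\Phi_i\!\left(f(A_i)\right)\le \bigl(f(m)+f(M)\bigr)\mathbf 1,
\]
which is precisely \eqref{15}.

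I do not expect a genuine obstacle here: convexity of $f$ is used only at scalar points and then transported by the (monotone) continuous functional calculus, while the rest is linearity and positivity of the $\Phi_i$. The one step that deserves care — and the only place where $\sum_i\Phi_i(\mathbf 1)=\mathbf 1$ is truly essential rather than bookkeeping — is verifying $\sigma(B)\subseteq[m,M]$, so that $f(B)$ is defined by the same functional calculus; and one must make sure the two applications of the chord inequality (to the $A_i$'s and to $B$) use complementary weights so that the sum telescopes to $(f(m)+f(M))\mathbf 1$.
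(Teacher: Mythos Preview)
Your proof is correct and follows essentially the same approach as the paper: the paper records the chord inequality \eqref{2}, applies it via functional calculus both to each $A_i$ (then through the $\Phi_i$) and to the shifted operator $(M+m)\mathbf 1-\sum_i\Phi_i(A_i)$, and combines the two estimates exactly as you do---this is precisely the content of the displayed chain \eqref{m} and the $\alpha=0$ case of the argument for Theorem~\ref{tha}. The only cosmetic difference is that the paper writes the second application as a substitution $t\mapsto M+m-t$ before passing to operators, whereas you apply the chord estimate directly to $B$; the resulting inequalities are identical.
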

Further, in the same reference, the following series of inequalities was proved
\begin{align}
& f\left( \left( M+m \right){{\mathbf{1}}_{\mathscr{K}}}-\sum\limits_{i=1}^{n}{{{\Phi }_{i}}\left( {{A}_{i}} \right)} \right) \le \left( f\left( M \right)+f\left( m \right) \right){{\mathbf{1}}_{\mathscr{K}}}\nonumber\\ 
&+\frac{\sum\nolimits_{i=1}^{n}{{{\Phi }_{i}}\left( {{A}_{i}} \right)}-M{{\mathbf{1}}_{\mathscr{K}}}}{M-m}f\left( m \right)+\frac{m{{\mathbf{1}}_{\mathscr{K}}}-\sum\nolimits_{i=1}^{n}{{{\Phi }_{i}}\left( {{A}_{i}} \right)}}{M-m}f\left( M \right) \label{m}\\ 
& \le \left( f\left( M \right)+f\left( m \right) \right){{\mathbf{1}}_{\mathscr{K}}}-\sum\limits_{i=1}^{n}{{{\Phi }_{i}}\left( f\left( {{A}_{i}} \right) \right)}\nonumber. 
\end{align}
Later, related and analogous results have been established in \cite{10, 15, 14}.

Our main goal of this article is to present a refinement of the operator inequality \eqref{15} without using convexity of $f$. Rather, 
using the idea by Mi\'ci\'c et al. \cite{8}, 
we assume a boundedness condition on $f''$. Then a discussion of log-convex version of Mercer's operator  inequality will be presented.
  
\section{\bf Main Results}
In this section we present our main results in two parts. In the first part, we discuss the twice differentiable case, then we discuss the log-convex case.

\subsection{\bf Twice Differentiable Functions}\label{s2}
We begin with the non-convex version of Theorem \ref{thbb}. We use the following symbol in this paper.
\begin{enumerate}[(i)]
\item $\mathbf{A}^{sa}=\left( {{A}_{1}},\ldots ,{{A}_{n}} \right)$, where ${{A}_{i}}\in \mathbb{B}\left( \mathscr{H} \right)$ are self-adjoint operators with $\sigma \left( {{A}_{i}} \right)\subseteq \left[ m,M \right]$ for some scalars $0<m<M$.
\item $\mathbf{\Phi}^+ =\left( {{\Phi }_{1}},\ldots ,{{\Phi }_{n}} \right)$, where ${{\Phi }_{i}}:\mathbb{B}\left( \mathscr{H} \right)\to \mathbb{B}\left( \mathscr{K} \right)$ are positive linear maps.
\end{enumerate}
\begin{theorem}\label{tha}
Let ${{A}_{1}},\ldots ,{{A}_{n}}\in \mathbb{B}\left( \mathscr{H} \right)$ be self-adjoint operators with spectra in $\left[ m,M \right]$ and let ${{\Phi }_{1}},\ldots ,{{\Phi }_{n}}:\mathbb{B}\left( \mathscr{H} \right)\to \mathbb{B}\left( \mathscr{K} \right)$ be positive linear maps with $\sum\nolimits_{i=1}^{n}{{{\Phi }_{i}}\left( {{\mathbf{1}}_{\mathscr{H}}} \right)}={{\mathbf{1}}_{\mathscr{K}}}$. If $f:\left[ m,M \right]\subseteq \mathbb{R}\to \mathbb{R}$ is a continuous twice differentiable function such that $\alpha \le f''\le \beta $ with $\alpha ,\beta \in \mathbb{R}$, then
\begin{eqnarray}
&& \left( f\left( M \right)+f\left( m \right) \right){{\mathbf{1}}_{\mathscr{K}}}-\sum\limits_{i=1}^{n}{{{\Phi }_{i}}\left( f\left( {{A}_{i}} \right) \right)} -\beta J(m,M,\mathbf{A}^{sa},\mathbf{\Phi}^+ ) \nonumber\\ 
&& \le f\left( \left( M+m \right){{\mathbf{1}}_{\mathscr{K}}}-\sum\limits_{i=1}^{n}{{{\Phi }_{i}}\left( {{A}_{i}} \right)} \right) \label{0001}\\ 
&& \le \left( f\left( M \right)+f\left( m \right) \right){{\mathbf{1}}_{\mathscr{K}}}-\sum\limits_{i=1}^{n}{{{\Phi }_{i}}\left( f\left( {{A}_{i}} \right) \right)} -\alpha J(m,M,\mathbf{A}^{sa},\mathbf{\Phi}^+ ), \label{0002}
\end{eqnarray}
where 
\begin{eqnarray}
J(m,M,\mathbf{A}^{sa},\mathbf{\Phi}^+)&&\hspace*{-5mm}:=\left( M+m \right)\sum\limits_{i=1}^{n}{{{\Phi }_{i}}\left( {{A}_{i}} \right)}-Mm{{\mathbf{1}}_{\mathscr{K}}}\nonumber \\
&&-\frac{1}{2}\left( {{\left( \sum\limits_{i=1}^{n}{{{\Phi }_{i}}\left( {{A}_{i}} \right)} \right)}^{2}}+\sum\limits_{i=1}^{n}{{{\Phi }_{i}}\left( A_{i}^{2} \right)} \right)\geq 0. 
\end{eqnarray}
\end{theorem}
\begin{proof}
Notice that for any convex function $f$ and $m\le t\le M$, we have
\begin{equation}\label{2}
f\left( t \right)=f\left( \frac{M-t}{M-m}m+\frac{t-m}{M-m}M \right)\le L_{f}\left( t \right),
\end{equation}
where 
\begin{equation}\label{l}
L_{f}\left( t \right):= \frac{M-t}{M-m}f\left( m \right)+\frac{t-m}{M-m}f\left( M \right).
\end{equation}
Letting ${{g}_{\alpha }}\left( t \right):= f\left( t \right)-\frac{\alpha }{2}{{t}^{2}}$ $\left( m\le t\le M \right)$, we observe that $g$ is convex noting the assumption $\alpha\leq f''$. Applying \eqref{2} to the function $g$, we have $g(t)\leq L_g(t)$, which leads to   
\begin{equation}\label{1}
f\left( t \right)\le L_{f}\left( t \right)-\frac{\alpha }{2}\left\{ \left( M+m \right)t-Mm-{{t}^{2}} \right\}.
\end{equation}
Since $m\le M+m-t\le M$, we can replace $t$ in \eqref{1} with $M+m-t$, to get
	\[f\left( M+m-t \right)\le {{L}_{0}}\left( t \right)-\frac{\alpha }{2}\left\{ \left( M+m \right)t-Mm-{{t}^{2}} \right\},\] 
where
\[{{L}_{0}}\left( t \right):= L\left( M+m-t \right)=f\left( M \right)+f\left( m \right)-L_{f}\left( t \right).\]
Using functional calculus for the operator $m{{\mathbf{1}}_{\mathscr{K}}}\le \sum\nolimits_{i=1}^{n}{{{\Phi }_{i}}\left( {{A}_{i}} \right)}\le M{{\mathbf{1}}_{\mathscr{K}}}$, we infer that
\begin{equation}\label{m1}
\begin{aligned}
  & f\left( \left( M+m \right){{\mathbf{1}}_{\mathscr{K}}}-\sum\limits_{i=1}^{n}{{{\Phi }_{i}}\left( {{A}_{i}} \right)} \right)\le {{L}_{0}}\left( \sum\limits_{i=1}^{n}{{{\Phi }_{i}}\left( {{A}_{i}} \right)} \right) \\ 
 &\quad -\frac{\alpha }{2}\left\{ \left( M+m \right)\sum\limits_{i=1}^{n}{{{\Phi }_{i}}\left( {{A}_{i}} \right)}-Mm{{\mathbf{1}}_{\mathscr{K}}}-{{\left( \sum\limits_{i=1}^{n}{{{\Phi }_{i}}\left( {{A}_{i}} \right)} \right)}^{2}} \right\}.  
\end{aligned}
\end{equation}
On the other hand, by applying functional calculus for the operator $m{{\mathbf{1}}_{\mathscr{H}}}\le {{A}_{i}}\le M{{\mathbf{1}}_{\mathscr{H}}}$ in \eqref{1}, we get
\[f\left( {{A}_{i}} \right)\le L_{f}\left( {{A}_{i}} \right)-\frac{\alpha }{2}\left\{ \left( M+m \right){{A}_{i}}-Mm{{\mathbf{1}}_{\mathscr{H}}}-A_{i}^{2} \right\}.\]
Applying the positive linear maps ${{\Phi }_{i}}$ and adding in the last inequality yield 
\begin{eqnarray}
&&\sum\limits_{i=1}^{n}{{{\Phi }_{i}}\left( f\left( {{A}_{i}} \right) \right)}\le L_0\left( \sum\limits_{i=1}^{n}{{{\Phi }_{i}}\left( {{A}_{i}} \right)} \right) \nonumber \\
&& -\frac{\alpha }{2}\left\{ \left( M+m \right)\sum\limits_{i=1}^{n}{{{\Phi }_{i}}\left( {{A}_{i}} \right)}-Mm{{\mathbf{1}}_{\mathscr{K}}}-\sum\limits_{i=1}^{n}{{{\Phi }_{i}}\left( A_{i}^{2} \right)} \right\}.\label{mm1}
\end{eqnarray}
Combining the two inequalities \eqref{m1} and \eqref{mm1}, we get \eqref{0002}.

Finally we give the proof of $J(m,M,\Phi_i,A_i) \geq 0$.
Since $m {{\mathbf{1}}_{\mathscr{H}}}\leq A_i\leq M {{\mathbf{1}}_{\mathscr{H}}}$, we have $(M  {{\mathbf{1}}_{\mathscr{H}}}-A_i)(A_i-m  {{\mathbf{1}}_{\mathscr{H}}}) \geq 0$ which implies
$
(M+m)A_i-m M {{\mathbf{1}}_{\mathscr{H}}} -A_i^2\geq 0$. Thus we have
$(M+m)\Phi(A_i)-m M \Phi({{\mathbf{1}}_{\mathscr{H}}}) -\Phi(A_i^2)\geq 0.$
Taking a summation on $i=1,\cdots ,n$ of this inequality with taking an account for $\sum_{i=1}^{n}\Phi_
i({\mathbf{1}}_{\mathscr{H}})={\mathbf{1}}_{\mathscr{K}}$, we obtain
\begin{equation}\label{needed_1_remark}
\left( M+m \right)\sum\limits_{i=1}^{n}{{{\Phi }_{i}}\left( {{A}_{i}} \right)}-Mm{{\mathbf{1}}_{\mathscr{K}}}-\sum_{i=1}^{n}\Phi(A_i^2)\geq 0.
\end{equation}
Further, noting that $m\leq \sum_{i=1}^{n}\Phi_i(A_i)\leq M,$ we also have
\begin{equation}\label{needed_2_remark}
(M+m)\sum_{i=1}^{n}\Phi_i(A_i)-m M{\mathbf{1}}_{\mathscr{K}}-\left(\sum_{i=1}^{n}\Phi_i(A_i)\right)^2\geq 0.
\end{equation}
Adding \eqref{needed_1_remark} and \eqref{needed_2_remark} and dividing by $2$, we obtain $J(m,M,\mathbf{A}^{sa},\mathbf{\Phi}^+)\ge 0.$

The inequality \eqref{0001} follows similarly by taking into account that
\[L_{f}\left( t \right)-\frac{\beta }{2}\left\{ \left( M+m \right)t-Mm-{{t}^{2}} \right\}\le f\left( t \right),\qquad\text{ }m\le t\le M.\]
The details are left to the reader. This completes the proof.
\end{proof}

\medskip

In the following example, we present the advantage of using twice differentiable functions in Theorem \ref{tha}. 
\begin{example}
Let $f\left( t \right)=\sin t\text{ }\left( 0\le t\le 2\pi  \right)$, $A=\left( \begin{matrix}
\frac{\pi }{4} & 0  \\
0 & \frac{\pi }{2}  \\
\end{matrix} \right)$ and $\Phi \left( A \right)=\frac{1}{2}Tr\left[ A \right]$. Actually the function $f(t)=\sin t$ is concave on $[0,\pi]$. Letting $m=\frac{\pi }{4}$ and $M=\frac{\pi }{2}$, we obtain 
\[0.9238\approx f\left( \left( M+m \right)-\Phi \left( A \right) \right)\nless f\left( M \right)+f\left( m \right)-\Phi \left( f\left( A \right) \right)\approx 0.8535.\]
That is, \eqref{15} may fail without the convexity assumption. However, by considering the weaker assumptions assumed in Theorem \ref{tha}, we get
\[\begin{aligned}
 0.9238&\approx f\left( \left( M+m \right)-\Phi \left( A \right) \right) \\ 
& \lneqq f\left( M \right)+f\left( m \right)-\Phi \left( f\left( A \right) \right)\\
& -\alpha \left\{ \left( M+m \right)\Phi \left( A \right)-Mm-\frac{1}{2}\left\{ \Phi {{\left( A \right)}^{2}}+\Phi \left( {{A}^{2}} \right) \right\} \right\}\approx 0.9306, \\ 
\end{aligned}\]
since $f''(t)=-\sin t$ which gives $\alpha=-1.$
\end{example}

To better understand the relation between Theorems \ref{thbb} and \ref{tha}, we present the following remark, where we clarify how the first theorem is retrieved from the second.

\begin{remark}
The inequality \eqref{0002} in Theorem \ref{tha} with an assumption on a twice differentiable function $f$ such that $\alpha \leq f''\leq \beta$ for $\alpha,\beta \in \mathbb{R}$ gives a better upper bound of $$f\left( \left( M+m \right){{\mathbf{1}}_{\mathscr{K}}}-\sum\limits_{i=1}^{n}{{{\Phi }_{i}}\left( {{A}_{i}} \right)} \right)$$ than that in \eqref{15}, since $J(m,M,\mathbf{A}^{sa},\mathbf{\Phi}^+)\geq 0$, if we take $\alpha \geq 0$. Additionally to this result, we obtained a reverse type inequality \eqref{0001} which gives a lower bound of $$f\left( \left( M+m \right){{\mathbf{1}}_{\mathscr{K}}}-\sum\limits_{i=1}^{n}{{{\Phi }_{i}}\left( {{A}_{i}} \right)} \right).$$
\end{remark}

\subsection{\bf Log-convex Functions}\label{s3}
We conclude this section by presenting Mercer-type operator inequalities for log-convex functions. Recall that a positive function defined on an interval $I$ (or, more generally, on a convex subset of some vector space) is called  {\it $log $-convex} if $\log f\left( x \right)$ is a convex function of $x$. We observe that such functions satisfy the elementary inequality

\[f\left( \left( 1-v \right)a+vb \right)\le {{\left[ f\left( a \right) \right]}^{1-v}}{{\left[ f\left( b \right) \right]}^{v}},\qquad \text{ }0\le v\le 1\] 
for any $a,b\in I$. $f$ is called {\it $log $-concave} if the inequality above is reversed (that is, when $\frac{1}{f}$ is $\log $-convex).
By virtue of the arithmetic-geometric mean inequality, we have
\begin{equation}\label{08}
f\left( \left( 1-v \right)a+vb \right)\le {{\left[ f\left( a \right) \right]}^{1-v}}{{\left[ f\left( b \right) \right]}^{v}}\le \left( 1-v \right)f\left( a \right)+vf\left( b \right),
\end{equation}
which implies convexity of log-convex functions. This double inequality is of special interest since \eqref{08} can be written as
\begin{equation}\label{008}
f\left( t \right)\le {{\left[ f\left( m \right) \right]}^{\frac{M-t}{M-m}}}{{\left[ f\left( M \right) \right]}^{\frac{t-m}{M-m}}}\le L_f\left( t \right),\qquad\text{ }m\le t\le M
\end{equation}
where $L_f\left( t \right)$ is as in \eqref{l}.

Manipulating the inequality \eqref{008}, we have the following extension of Theorem \ref{thbb} to the context of log-convex functions. The proof is left to the reader.
\begin{theorem}\label{010}
Let all the assumptions of Theorem \ref{thbb} hold except that $f:\left[ m,M \right]\to \left( 0,\infty  \right)$ is log-convex. Then
\begin{equation}\label{011}
\begin{aligned}
 f\left( \left( M+m \right){{\mathbf{1}}_{\mathscr{K}}}-\sum\limits_{i=1}^{n}{{{\Phi }_{i}}\left( {{A}_{i}} \right)} \right)&\le {{\left[ f\left( m \right) \right]}^{\frac{\sum\nolimits_{i=1}^{n}{{{\Phi }_{i}}\left( {{A}_{i}} \right)-m{{\mathbf{1}}_{\mathscr{K}}}}}{M-m}}}{{\left[ f\left( M \right) \right]}^{\frac{M{{\mathbf{1}}_{\mathscr{K}}}-\sum\nolimits_{i=1}^{n}{{{\Phi }_{i}}\left( {{A}_{i}} \right)}}{M-m}}} \\ 
& \le \left( f\left( M \right)+f\left( m \right) \right){{\mathbf{1}}_{\mathscr{K}}}-\sum\limits_{i=1}^{n}{{{\Phi }_{i}}\left( f\left( {{A}_{i}} \right) \right)}.  
\end{aligned}
\end{equation}
\end{theorem}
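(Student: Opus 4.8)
The plan is to mirror the proof of Theorem \ref{tha}, but replacing the linear bound $L(t)$ and the quadratic correction term by the geometric mean interpolant appearing in \eqref{008}. The key observation is that \eqref{008} gives us, for $m \le t \le M$, the two-sided estimate $f(t) \le [f(m)]^{(M-t)/(M-m)}[f(M)]^{(t-m)/(M-m)} \le L(t)$, and since $f$ is log-convex it is in particular convex, so the original Jensen--Mercer machinery applies.

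\medskip

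First I would substitute $t \mapsto M+m-t$ in \eqref{008}. Since $m \le M+m-t \le M$ whenever $m \le t \le M$, this yields
\[
f(M+m-t) \le [f(m)]^{\frac{t-m}{M-m}}[f(M)]^{\frac{M-t}{M-m}} \le L_0(t),
\]
where $L_0(t) = f(M)+f(m)-L(t)$ as in the proof of Theorem \ref{tha} (note the exponents swap roles under the reflection). Next, applying the continuous functional calculus to the self-adjoint operator $S := \sum_{i=1}^n \Phi_i(A_i)$, which satisfies $m\mathbf{1}_{\mathscr{K}} \le S \le M\mathbf{1}_{\mathscr{K}}$ by positivity of the $\Phi_i$ and $\sum \Phi_i(\mathbf{1}_{\mathscr{H}}) = \mathbf{1}_{\mathscr{K}}$, the first inequality above becomes
\[
f\big((M+m)\mathbf{1}_{\mathscr{K}} - S\big) \le [f(m)]^{\frac{S - m\mathbf{1}_{\mathscr{K}}}{M-m}}[f(M)]^{\frac{M\mathbf{1}_{\mathscr{K}} - S}{M-m}},
\]
which is exactly the first inequality of \eqref{011}. (The exponents are commuting continuous functions of the single operator $S$, so the right-hand side is an unambiguous positive operator.)

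\medskip

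For the second inequality of \eqref{011}, I would use the \emph{second} half of \eqref{008} pointwise on the spectrum of $S$, namely $[f(m)]^{(M-t)/(M-m)}[f(M)]^{(t-m)/(M-m)} \le L(t)$; after the reflection $t \mapsto M+m-t$ and functional calculus at $S$ this gives
\[
[f(m)]^{\frac{S-m\mathbf{1}_{\mathscr{K}}}{M-m}}[f(M)]^{\frac{M\mathbf{1}_{\mathscr{K}}-S}{M-m}} \le L_0(S) = (f(M)+f(m))\mathbf{1}_{\mathscr{K}} - L(S).
\]
It then remains to bound $L(S)$ from below by $\sum_{i=1}^n \Phi_i(f(A_i))$. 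This follows from convexity of $f$: by \eqref{008} (second inequality) applied to each $A_i$ via functional calculus we get $f(A_i) \le L(A_i)$, and since $L$ is affine, $\sum_i \Phi_i(f(A_i)) \le \sum_i \Phi_i(L(A_i)) = L(S)$ using $\sum_i \Phi_i(\mathbf{1}_{\mathscr{H}}) = \mathbf{1}_{\mathscr{K}}$. Combining the last two displays yields $[f(m)]^{\frac{S-m\mathbf{1}_{\mathscr{K}}}{M-m}}[f(M)]^{\frac{M\mathbf{1}_{\mathscr{K}}-S}{M-m}} \le (f(M)+f(m))\mathbf{1}_{\mathscr{K}} - \sum_i \Phi_i(f(A_i))$, completing the proof.

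\medskip

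The only genuinely delicate point is the handling of the middle term: one must be sure that $[f(m)]^{p}[f(M)]^{q}$ with $p+q = \mathbf{1}_{\mathscr{K}}$, $p = (S-m\mathbf{1}_{\mathscr{K}})/(M-m) \ge 0$, $q = (M\mathbf{1}_{\mathscr{K}}-S)/(M-m) \ge 0$, is well-defined and that all the operator inequalities above are legitimate consequences of scalar inequalities on $\sigma(S) \subseteq [m,M]$. Since $f(m), f(M) > 0$ and $p, q$ are commuting functions of the single self-adjoint operator $S$, the expression equals $h(S)$ where $h(t) = [f(m)]^{(M-t)/(M-m)}[f(M)]^{(t-m)/(M-m)}$ is a positive continuous scalar function on $[m,M]$, so order-preservation of the functional calculus for a single operator applies directly and there is no subtlety with non-commutativity. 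Everything else is a routine transcription of the argument in Theorem \ref{tha}.
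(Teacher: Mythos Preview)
Your proof is correct and follows precisely the approach the paper intends: the paper actually leaves the proof of this theorem ``to the reader as an exercise,'' merely pointing to inequality \eqref{008} and the template of the proof of Theorem~\ref{tha}, which is exactly what you carry out. The only blemishes are cosmetic---the bound $f(A_i)\le L(A_i)$ should cite \eqref{2} (or the full chain in \eqref{008}) rather than just its second half, and the exponents in your auxiliary scalar function $h(t)$ in the last paragraph are swapped relative to the middle term of \eqref{011}---but neither affects the validity of the argument.
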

\section{\bf Applications}\label{s4}
In this section, we present some applications of the main results that we have shown so far. First, we review and introduce the notations.
\begin{enumerate}[(i)]
\item $\mathbf{A}^+=\left( {{A}_{1}},\ldots ,{{A}_{n}} \right)$, where ${{A}_{i}}\in \mathbb{B}\left( \mathscr{H} \right)$ are positive invertible operators with $\sigma \left( {{A}_{i}} \right)\subseteq \left[ m,M \right]$ for some scalars $0<m<M$.
\item $\mathbf{\Phi}^+ =\left( {{\Phi }_{1}},\ldots ,{{\Phi }_{n}} \right)$, where ${{\Phi }_{i}}:\mathbb{B}\left( \mathscr{H} \right)\to \mathbb{B}\left( \mathscr{K} \right)$ are positive linear maps.
\item  $C\left( \left[ m,M \right] \right)$ is the set of all real valued continuous functions on an interval $\left[ m,M \right]$.
\end{enumerate}
 We also need to remind the reader that a function $f\in C\left( \left[ m,M \right] \right)$ is called operator monotone increasing (or operator increasing for short) if $f\left( A \right)\le f\left( B \right)$ whenever $A,B$ are self-adjoint operators with spectra in $[m,M]$ and such that $A\leq B.$ That is, when $f$ preserves the order of self-adjoint operator. A function $f\in C\left( \left[ m,M \right] \right)$ is said to be operator decreasing if $-f$ is operator monotone.

The so called operator quasi-arithmetic mean of Mercer's type was defined in \cite{2} as follows:
\[{{\widetilde{M}}_{\varphi }}\left( \mathbf{A}^+,\mathbf{\Phi}^+  \right):= {{\varphi }^{-1}}\left( \left( \varphi \left( M \right)+\varphi \left( m \right) \right){{\mathbf{1}}_{\mathscr{K}}}-\sum\limits_{i=1}^{n}{{{\Phi }_{i}}\left( \varphi \left( {{A}_{i}} \right) \right)} \right).\]
In this reference, the following result was shown.
\begin{theorem}
Let $\varphi ,\psi \in C\left( \left[ m,M \right] \right)$ be two strictly monotonic functions.
\begin{itemize}
	\item[(i)] If either $\psi \circ {{\varphi }^{-1}}$ is convex and ${{\psi }^{-1}}$ is operator increasing, or $\psi \circ {{\varphi }^{-1}}$ is concave and ${{\psi }^{-1}}$ is operator decreasing, then 
	\begin{equation}\label{3.1}
{{\widetilde{M}}_{\varphi }}\left( \mathbf{A}^+,\mathbf{\Phi}^+  \right)\le {{\widetilde{M}}_{\psi }}\left( \mathbf{A}^+,\mathbf{\Phi}^+  \right).
	\end{equation}
	\item[(ii)] If either $\psi \circ {{\varphi }^{-1}}$ is concave and ${{\psi }^{-1}}$ is operator increasing, or $\psi \circ {{\varphi }^{-1}}$ is convex and ${{\psi }^{-1}}$ is operator decreasing, then the inequality in \eqref{3.1} is reversed.
\end{itemize}
\end{theorem}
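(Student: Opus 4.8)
The plan is to reduce everything to the Jensen--Mercer operator inequality (Theorem \ref{thbb}) applied to the composite function $f := \psi \circ \varphi^{-1}$. First I would fix notation: since $\varphi$ is strictly monotone and continuous, it maps $[m,M]$ bijectively onto the interval $J$ with endpoints $\varphi(m)$ and $\varphi(M)$; writing $m' = \min\{\varphi(m),\varphi(M)\}$ and $M' = \max\{\varphi(m),\varphi(M)\}$ we have $J = [m', M']$ and, crucially, $m' + M' = \varphi(m) + \varphi(M)$. Put $B_i := \varphi(A_i)$; by the spectral mapping theorem $\sigma(B_i) \subseteq J$, hence $m'\mathbf{1}_{\mathscr{K}} \le \sum_{i=1}^n \Phi_i(B_i) \le M'\mathbf{1}_{\mathscr{K}}$, and therefore the Mercer combination $C := (\varphi(m)+\varphi(M))\mathbf{1}_{\mathscr{K}} - \sum_{i=1}^n \Phi_i(B_i)$ likewise satisfies $m'\mathbf{1}_{\mathscr{K}} \le C \le M'\mathbf{1}_{\mathscr{K}}$. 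In particular $\sigma(C)$ lies in the range of $\varphi$, so $\widetilde{M}_\varphi(\mathbf{A},\mathbf{\Phi}) = \varphi^{-1}(C)$ is well defined with spectrum contained in $[m,M]$; the same holds with $\psi$ in place of $\varphi$.

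Next I would record the functional-calculus identities that drive the reduction: since $f\circ \varphi = \psi$ on $[m,M]$, we get $f(\varphi(m)) = \psi(m)$, $f(\varphi(M)) = \psi(M)$, $f(B_i) = f(\varphi(A_i)) = \psi(A_i)$, and $f(C) = f(\varphi(\widetilde{M}_\varphi(\mathbf{A},\mathbf{\Phi}))) = \psi(\widetilde{M}_\varphi(\mathbf{A},\mathbf{\Phi}))$. Assume first, as in the first alternative of (i), that $g := \psi\circ\varphi^{-1}$ is convex. Applying Theorem \ref{thbb} to $f = g$ on $[m',M']$ with the operators $B_i$ and the maps $\Phi_i$ gives
\[
f\left((m'+M')\mathbf{1}_{\mathscr{K}} - \sum_{i=1}^n \Phi_i(B_i)\right) \le \left(f(m') + f(M')\right)\mathbf{1}_{\mathscr{K}} - \sum_{i=1}^n \Phi_i\left(f(B_i)\right),
\]
which, by the identities above, is precisely
\[
\psi\left(\widetilde{M}_\varphi(\mathbf{A},\mathbf{\Phi})\right) \le \left(\psi(m)+\psi(M)\right)\mathbf{1}_{\mathscr{K}} - \sum_{i=1}^n \Phi_i\left(\psi(A_i)\right) = \psi\left(\widetilde{M}_\psi(\mathbf{A},\mathbf{\Phi})\right).
\]
Since both means have spectra in $[m,M]$, both sides have spectra in the range of $\psi$, so applying the operator increasing function $\psi^{-1}$ yields \eqref{3.1}. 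For the second alternative of (i), where $g$ is concave and $\psi^{-1}$ is operator decreasing, run the same argument with the convex function $-f$ in place of $f$: the displayed operator inequality reverses, and then the operator decreasing $\psi^{-1}$ reverses it once more, again producing \eqref{3.1}.

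Part (ii) follows by exactly the same scheme: when $g$ is concave (apply Theorem \ref{thbb} to $-f$) together with $\psi^{-1}$ operator increasing, or when $g$ is convex together with $\psi^{-1}$ operator decreasing, the intermediate inequality between $\psi(\widetilde{M}_\varphi(\mathbf{A},\mathbf{\Phi}))$ and $\psi(\widetilde{M}_\psi(\mathbf{A},\mathbf{\Phi}))$ and the monotonicity type of $\psi^{-1}$ combine to give the reverse of \eqref{3.1}. The only delicate points are the bookkeeping ones: verifying that the Mercer combination of the $B_i$ stays inside the range of $\varphi$ (which is where the symmetric identity $m'+M' = \varphi(m)+\varphi(M)$ is used, and which works whether $\varphi$ is increasing or decreasing), and being careful that $\psi^{-1}$ is applied only to operators whose spectra lie in the interval on which it is assumed operator monotone. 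Note that no operator convexity of $\varphi$ or $\psi$ themselves is required --- only ordinary convexity/concavity of the single composite $\psi\circ\varphi^{-1}$ --- which is exactly why Theorem \ref{thbb}, free of operator-convexity hypotheses, is the correct tool.
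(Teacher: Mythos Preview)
Your proof is correct and follows the standard route: substitute $f=\psi\circ\varphi^{-1}$, replace $A_i$, $m$, $M$ by $\varphi(A_i)$, $\varphi(m)$, $\varphi(M)$, invoke the Jensen--Mercer operator inequality (Theorem~\ref{thbb}), and then apply the operator monotone $\psi^{-1}$. The paper does not actually prove this theorem---it merely quotes it from Matkovi\'c et al.\ \cite[Theorem~4]{2}---but the proof of the paper's own Theorem~\ref{th3} uses exactly this substitution scheme (with \eqref{0002} in place of \eqref{15}), so your argument is precisely in line with the intended method.
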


By virtue of Theorem \ref{tha}, we have the following extension of this result.
\begin{theorem}\label{th3}
Let $\varphi ,\psi \in C\left( \left[ m,M \right] \right)$ be two strictly monotonic functions and $\psi \circ {{\varphi }^{-1}}$ is twice differentiable function. 
\begin{itemize}
	\item[(i)] If $\alpha \le {{\left( \psi \circ {{\varphi }^{-1}} \right)}^{''}}$ with $\alpha \in \mathbb{R}$ and ${{\psi }^{-1}}$ is operator monotone, then 
	\begin{equation}\label{o}
	{{\widetilde{M}}_{\varphi }}\left( \mathbf{A}^+,\mathbf{\Phi}^+  \right)\le {{\psi }^{-1}}\left\{ \psi \left( {{\widetilde{M}}_{\psi }}\left( \mathbf{A}^+,\mathbf{\Phi}^+  \right) \right)-\alpha K \left( m,M,\varphi ,\mathbf{A}^+,\mathbf{\Phi}^+  \right) \right\},
	\end{equation}
	where
	\[\begin{aligned}
	K \left( m,M,\varphi ,\mathbf{A}^+,\mathbf{\Phi}^+  \right)&:= \left( \varphi \left( M \right)+\varphi \left( m \right) \right)\sum\limits_{i=1}^{n}{{{\Phi }_{i}}\left( \varphi \left( {{A}_{i}} \right) \right)}-\varphi \left( M \right)\varphi \left( m \right){{\mathbf{1}}_{\mathscr{K}}} \\ 
	&\quad -\frac{1}{2}\left( {{\left( \sum\limits_{i=1}^{n}{{{\Phi }_{i}}\left( \varphi \left( {{A}_{i}} \right) \right)} \right)}^{2}}+\sum\limits_{i=1}^{n}{{{\Phi }_{i}}\left( \varphi {{\left( {{A}_{i}} \right)}^{2}} \right)} \right).  
	\end{aligned}\]
	\item[(ii)] If ${{\left( \psi \circ {{\varphi }^{-1}} \right)}^{''}}\le \beta $ with $\beta \in \mathbb{R}$ and ${{\psi }^{-1}}$ is operator monotone, then the reverse inequality is valid in \eqref{o} with $\beta $ instead of $\alpha $.
\end{itemize}
\end{theorem}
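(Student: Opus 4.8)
The plan is to reduce the theorem to Theorem~\ref{tha} via the substitution that converts a quasi-arithmetic--mean inequality into an ordinary Jensen--Mercer inequality: put
\[
f\equiv \psi\circ\varphi^{-1},\qquad B_i\equiv \varphi(A_i)\quad(i=1,\dots,n),
\]
and set $m'\equiv\min\{\varphi(m),\varphi(M)\}$, $M'\equiv\max\{\varphi(m),\varphi(M)\}$. Since $\varphi$ is continuous and strictly monotonic, $\varphi([m,M])=[m',M']$, so by the spectral mapping theorem $\sigma(B_i)=\varphi(\sigma(A_i))\subseteq[m',M']$; moreover $f$ is continuous (as a composition of continuous functions) and, by hypothesis, twice differentiable on $[m',M']$, with $f''\ge\alpha$ in case~(i) and $f''\le\beta$ in case~(ii). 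Hence the data $B_1,\dots,B_n$, $\Phi_1,\dots,\Phi_n$, $f$ fulfil the hypotheses of Theorem~\ref{tha} on the interval $[m',M']$.

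First I would apply Theorem~\ref{tha} on $[m',M']$: inequality~\eqref{0002} in case~(i) (whose proof uses only $f''\ge\alpha$) and inequality~\eqref{0001} in case~(ii) (which uses only $f''\le\beta$). Then I would unwind the substitution using the elementary identities $m'+M'=\varphi(m)+\varphi(M)$, $m'M'=\varphi(m)\varphi(M)$, $f(m')+f(M')=\psi(m)+\psi(M)$, $f(B_i)=\psi\bigl(\varphi^{-1}(\varphi(A_i))\bigr)=\psi(A_i)$ and $B_i^2=\varphi(A_i)^2$. Crucially, $(m'+M')\mathbf{1}_{\mathscr{K}}-\sum_{i=1}^{n}\Phi_i(B_i)$ is exactly the argument of $\varphi^{-1}$ in the definition of $\widetilde{M}_\varphi(\mathbf{A},\mathbf{\Phi})$, so the left side of~\eqref{0002}/\eqref{0001} is $f\bigl((m'+M')\mathbf{1}_{\mathscr{K}}-\sum_{i}\Phi_i(B_i)\bigr)=\psi\bigl(\widetilde{M}_\varphi(\mathbf{A},\mathbf{\Phi})\bigr)$; similarly $(\psi(m)+\psi(M))\mathbf{1}_{\mathscr{K}}-\sum_{i}\Phi_i(\psi(A_i))=\psi\bigl(\widetilde{M}_\psi(\mathbf{A},\mathbf{\Phi})\bigr)$, and the bracket multiplying the constant in~\eqref{0002}/\eqref{0001} becomes precisely $\diamond(m,M,\varphi,\mathbf{A},\mathbf{\Phi})$. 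Thus Theorem~\ref{tha} yields
\[
\psi\bigl(\widetilde{M}_\varphi(\mathbf{A},\mathbf{\Phi})\bigr)\le \psi\bigl(\widetilde{M}_\psi(\mathbf{A},\mathbf{\Phi})\bigr)-\alpha\,\diamond(m,M,\varphi,\mathbf{A},\mathbf{\Phi})
\]
in case~(i), and the reversed inequality with $\beta$ in place of $\alpha$ in case~(ii).

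It remains to apply $\psi^{-1}$ to both sides; since $\psi^{-1}$ is operator monotone this preserves the order, and $\psi^{-1}\bigl(\psi(\widetilde{M}_\varphi(\mathbf{A},\mathbf{\Phi}))\bigr)=\widetilde{M}_\varphi(\mathbf{A},\mathbf{\Phi})$, giving~\eqref{o} and, in case~(ii), its reverse. I expect the only genuinely delicate point to be the admissibility of this last step, i.e.\ that the operator $\psi\bigl(\widetilde{M}_\psi(\mathbf{A},\mathbf{\Phi})\bigr)-\alpha\,\diamond(m,M,\varphi,\mathbf{A},\mathbf{\Phi})$ has spectrum inside the interval on which $\psi^{-1}$ acts monotonically. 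Here one uses that the assumption that $\psi^{-1}$ be operator monotone forces $\psi$ to be increasing, so $\widetilde{M}_\varphi(\mathbf{A},\mathbf{\Phi})$ and $\widetilde{M}_\psi(\mathbf{A},\mathbf{\Phi})$ have spectra in $[m,M]$ and $\psi(\widetilde{M}_\varphi(\mathbf{A},\mathbf{\Phi}))$, $\psi(\widetilde{M}_\psi(\mathbf{A},\mathbf{\Phi}))$ have spectra in $[\psi(m),\psi(M)]$; and $\diamond(m,M,\varphi,\mathbf{A},\mathbf{\Phi})\ge0$ by the Remark following Theorem~\ref{tha} applied with $A_i$ replaced by $\varphi(A_i)$ and $[m,M]$ by $[m',M']$. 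In the representative case $\alpha\ge0$ (for instance when $\psi\circ\varphi^{-1}$ is convex), the displayed inequality then sandwiches $\psi(\widetilde{M}_\psi(\mathbf{A},\mathbf{\Phi}))-\alpha\,\diamond(m,M,\varphi,\mathbf{A},\mathbf{\Phi})$ between $\psi(m)\mathbf{1}_{\mathscr{K}}\le\psi(\widetilde{M}_\varphi(\mathbf{A},\mathbf{\Phi}))$ and $\psi(\widetilde{M}_\psi(\mathbf{A},\mathbf{\Phi}))\le\psi(M)\mathbf{1}_{\mathscr{K}}$, so its spectrum lies in $[\psi(m),\psi(M)]$ and $\psi^{-1}$ may legitimately be applied; in general one reads $\psi^{-1}$ as operator monotone on an interval large enough to contain that spectrum (automatic for the power, logarithm and exponential functions arising in applications). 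The remaining verifications are the routine bookkeeping of the substitution, which I would leave to the reader in the spirit of the preceding proofs.
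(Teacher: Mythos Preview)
Your proposal is correct and follows the same route as the paper: substitute $f=\psi\circ\varphi^{-1}$ and replace $A_i,m,M$ by $\varphi(A_i),\varphi(m),\varphi(M)$ in Theorem~\ref{tha}, then apply the operator monotone $\psi^{-1}$. You are in fact more careful than the paper in distinguishing $m'=\min\{\varphi(m),\varphi(M)\}$ from $M'$ and in discussing the spectral range needed to apply $\psi^{-1}$, points the paper leaves implicit.
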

\begin{proof}
Let $f=\psi \circ {{\varphi }^{-1}}$ in \eqref{0002} and replace ${{A}_{i}}$, $m$ and $M$ with $\varphi \left( {{A}_{i}} \right)$, $\varphi \left( m \right)$ and $\varphi \left( M \right)$ respectively. This implies
\[\psi \left( {{\widetilde{M}}_{\varphi }}\left( \mathbf{A}^+,\mathbf{\Phi}^+  \right) \right)\le \psi \left( {{\widetilde{M}}_{\psi }}\left( \mathbf{A}^+,\mathbf{\Phi}^+  \right) \right)-\alpha K \left( m,M,\varphi ,\mathbf{A}^+,\mathbf{\Phi}^+  \right).\]
Since ${{\psi }^{-1}}$ is operator monotone, the first conclusion follows immediately. The other case follows in a similar manner from \eqref{0001}. 
\end{proof}

\medskip

Similarly, Theorem \ref{010} implies the following version.
\begin{theorem}\label{th4}
Let $\varphi ,\psi \in C\left( \left[ m,M \right] \right)$ be two strictly monotonic functions. If $\psi \circ {{\varphi }^{-1}}$ is log-convex function and ${{\psi }^{-1}}$ is operator increasing, then
\[\begin{aligned}
{{\widetilde{M}}_{\varphi }}\left( \mathbf{A}^+,\mathbf{\Phi}^+  \right)&\le {{\psi }^{-1}}\left\{ {{\left[ \psi \left( m \right) \right]}^{\frac{\sum\nolimits_{i=1}^{n}{{{\Phi }_{i}}\left( \varphi \left( {{A}_{i}} \right) \right)}-\varphi \left( m \right){{\mathbf{1}}_{\mathscr{K}}}}{\varphi \left( M \right)-\varphi \left( m \right)}}}{{\left[ \psi \left( M \right) \right]}^{\frac{\varphi \left( M \right){{\mathbf{1}}_{\mathscr{K}}}-\sum\nolimits_{i=1}^{n}{{{\Phi }_{i}}\left( \varphi \left( {{A}_{i}} \right) \right)}}{\varphi \left( M \right)-\varphi \left( m \right)}}} \right\}\\
&\le {{\widetilde{M}}_{\psi }}\left( \mathbf{A}^+,\mathbf{\Phi}^+  \right).
\end{aligned}\]
\end{theorem}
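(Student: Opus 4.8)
The plan is to rerun, almost verbatim, the change-of-variables argument from the proof of Theorem~\ref{th3}, but starting from Theorem~\ref{010} in place of Theorem~\ref{tha}. First I would put $f\equiv\psi\circ\varphi^{-1}$, viewed on the interval $J$ whose endpoints are $\varphi(m)$ and $\varphi(M)$. By hypothesis $f$ is log-convex on $J$, hence a strictly positive continuous function there; in particular $f(\varphi(m))=\psi(m)>0$ and $f(\varphi(M))=\psi(M)>0$, which is exactly what makes the operator powers in the statement meaningful. Next I would apply Theorem~\ref{010} after replacing $A_i$ by $\varphi(A_i)$ and the scalars $m,M$ by $\varphi(m),\varphi(M)$: functional calculus makes each $\varphi(A_i)$ self-adjoint with spectrum contained in $J$, hence $\sum_{i=1}^{n}\Phi_i(\varphi(A_i))$ has spectrum in $J$ too, so the hypotheses of Theorem~\ref{010} hold (the theorem being insensitive to whether $\varphi$ is increasing or decreasing, i.e.\ to the order of the endpoints of $J$).

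With this substitution, and writing $X\equiv\big(\varphi(M)-\varphi(m)\big)^{-1}\big(\sum_{i=1}^{n}\Phi_i(\varphi(A_i))-\varphi(m)\mathbf{1}_{\mathscr{K}}\big)$, the two inequalities given by Theorem~\ref{010} read
\begin{align*}
f\big((\varphi(M)+\varphi(m))\mathbf{1}_{\mathscr{K}}-{\textstyle\sum_{i=1}^{n}}\Phi_i(\varphi(A_i))\big)
&\le [\psi(m)]^{X}\,[\psi(M)]^{\mathbf{1}_{\mathscr{K}}-X}\\
&\le (\psi(M)+\psi(m))\mathbf{1}_{\mathscr{K}}-{\textstyle\sum_{i=1}^{n}}\Phi_i\big(f(\varphi(A_i))\big),
\end{align*}
where I have already used $f(\varphi(m))=\psi(m)$ and $f(\varphi(M))=\psi(M)$. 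The remaining terms are simplified through the functional-calculus identity $f\circ\varphi=\psi$: this gives $f(\varphi(A_i))=\psi(A_i)$, and it identifies the leftmost operator as $\psi\big(\widetilde{M}_{\varphi}(\mathbf{A},\mathbf{\Phi})\big)$ directly from the definition of $\widetilde{M}_{\varphi}$. Thus the chain becomes $\psi\big(\widetilde{M}_{\varphi}(\mathbf{A},\mathbf{\Phi})\big)$ at the bottom, $[\psi(m)]^{X}[\psi(M)]^{\mathbf{1}_{\mathscr{K}}-X}$ in the middle, and $(\psi(M)+\psi(m))\mathbf{1}_{\mathscr{K}}-\sum_{i=1}^{n}\Phi_i(\psi(A_i))$ at the top.

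Finally I would apply $\psi^{-1}$, which is operator increasing by hypothesis, to this whole chain. Operator monotonicity preserves both $\le$ signs; $\psi^{-1}\big(\psi(\widetilde{M}_{\varphi}(\mathbf{A},\mathbf{\Phi}))\big)=\widetilde{M}_{\varphi}(\mathbf{A},\mathbf{\Phi})$ by functional calculus; and $\psi^{-1}$ of the top operator is $\widetilde{M}_{\psi}(\mathbf{A},\mathbf{\Phi})$ by definition. That is exactly the asserted two-sided inequality. The only point needing a sentence of care --- the nearest thing here to an obstacle, though a mild one --- is checking that $\psi^{-1}$ may legitimately be applied, i.e.\ that all three operators in the final chain have spectrum in the common range $[\min\{\psi(m),\psi(M)\},\max\{\psi(m),\psi(M)\}]$ of $\psi$ on $[m,M]$. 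This follows from the Mercer-type spectral bound $\varphi(m)\mathbf{1}_{\mathscr{K}}\le(\varphi(M)+\varphi(m))\mathbf{1}_{\mathscr{K}}-\sum_{i=1}^{n}\Phi_i(\varphi(A_i))\le\varphi(M)\mathbf{1}_{\mathscr{K}}$ (for increasing $\varphi$, with the endpoints swapped otherwise), the monotonicity of $\psi$, and the observation that $[\psi(m)]^{X}[\psi(M)]^{\mathbf{1}_{\mathscr{K}}-X}$ is a positive continuous function of the single operator $\sum_{i=1}^{n}\Phi_i(\varphi(A_i))$ with spectrum trapped between $\psi(m)$ and $\psi(M)$. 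Everything else runs parallel to the proof of Theorem~\ref{th3}.
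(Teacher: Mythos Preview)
Your proposal is correct and is precisely the argument the paper has in mind: the authors do not spell out a proof of Theorem~\ref{th4} but simply say it is obtained ``in the same spirit'' from Theorem~\ref{010}, i.e.\ by the change-of-variables $f=\psi\circ\varphi^{-1}$, $A_i\mapsto\varphi(A_i)$, $m\mapsto\varphi(m)$, $M\mapsto\varphi(M)$ followed by an application of the operator increasing function $\psi^{-1}$, exactly as in the proof of Theorem~\ref{th3}. Your extra paragraph verifying the spectral ranges so that $\psi^{-1}$ may be applied is a welcome bit of care that the paper omits.
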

\begin{remark}
By choosing appropriate functions $\varphi $ and $\psi $, and making suitable substitutions, the above results imply some improvements of certain inequalities governing operator power mean of Mercer's type. We leave the details of this idea to the interested reader as an application of our main results.
\end{remark}

\medskip

In the end of the article, we show the example such that there is no relationship between inequalities in Theorems \ref{th3} and \ref{th4}. Here, we restrict ourselves to the power function $f\left( t \right)={{t}^{p}}$ with $p<0$.
\begin{example}
	It is sufficient to compare \eqref{1} and the first inequality of \eqref{008}. We take $m=1$ and $M=3$. Setting
\[\begin{aligned}
 g(t) &=  \frac{M-t}{M-m} m^p +\frac{t-m}{M-m} M^p -\frac{p(p-1)M^{p-2}}{2} \left\{ (M+m) t - M m -t^2\right\} \\
 &-\left( m^{\frac{M-t}{M-m}} M^{\frac{t-m}{M-m}}\right)^p.
 \end{aligned}\]
	Some calculations show that $g(2) \approx -0.0052909$ when $p=-0.2$, while $g(2) \approx 0.0522794$ when $p=-1$.
	We thus conclude that there is no ordering between the RHS of inequality in \eqref{1} and the RHS of first inequality of \eqref{008}.
\end{example}

\medskip

\noindent{\bf Acknowledgements.}
The authors would like to thank the referees for their careful and insightful comments to improve our manuscript.
 The authors are also grateful to Dr. Trung Hoa Dinh for fruitful discussion and revising the manuscript.
\bibliographystyle{alpha}

%\vskip 0.4 true cm
%
%\tiny$^1$Young Researchers and Elite Club, Mashhad Branch, Islamic Azad University, Mashhad, Iran.
%
%{\it E-mail address:} hrmoradi@mshdiau.ac.ir
%
%\vskip 0.4 true cm
%
%\tiny$^2$Department of Information Science, College of Humanities and Sciences, Nihon University, 3-25-40, Sakurajyousui, Setagaya-ku, Tokyo, 156-8550, Japan.
%
%{\it E-mail address:} furuichi@chs.nihon-u.ac.jp
%\vskip 0.4 true cm
%
%\tiny$^3$Department of Basic Sciences, Princess Sumaya University for Technology, Amman 11941, Jordan.
%
%{\it E-mail address:} sababheh@psut.edu.jo, sababheh@yahoo.com

%-----------------------------------------------------------------------------
%-----------------------------------------------------------------------------
\end{document}